\newtheorem{theorem}{Theorem}[section]
\newtheorem{lemma}{Lemma}[section]
\theoremstyle{definition}
\newtheorem{definition}{Definition}[section]
\newtheorem{example}{Example}[section]
\newtheorem{remark}{Remark}[section]
\numberwithin{equation}{section}
\DeclareMathOperator{\alt}{\vee\hspace{-1.5mm}\wedge\,}
\DeclareMathOperator{\Lim}{Lim}
\newcommand{\RR}{\mathbb R}
\newcommand{\NN}{\mathbb N}
\newcommand{\ZZ}{\mathbb Z}
\begin{document}

\subjclass[2010]{47H10, 47H09}

\title[Fixed sets and fixed points in  $\Lim$--spaces]{Fixed sets and fixed points for mappings in generalized $\Lim$--spaces of Fr\'echet}
\author{Vladyslav Babenko}
\author{Vira Babenko}
\author{Oleg Kovalenko}
\maketitle

\begin{abstract}
 In this article we discuss a possibility to implement a well-known scheme of proof for contraction mapping theorems in a situation, when convergence, families of  Cauchy sequences, and contractiveness of mappings   are defined axiomatically. We also consider ways to specify families of Cauchy sequences and  contractiveness conditions using distance-like functions with values in some partially ordered set and establish fixed set and point theorems for generalized contractions of the \'Ciri\'c and Caristi types.
\end{abstract}
\keywords{Fixed point theorem, family of Cauchy sequences, Fr\'{e}chet limit space.}

\maketitle

\section{Introduction}
The theory of fixed point theorems is a well developed domain of Analysis and Topology (see books~\cite{Granas,Agarwal,Kirk} and references therein). In a series of papers (see, for example,~\cite{kasahara,Achari,rus,nieto}) the theory of fixed points was developed in abstract L-spaces in the sense of Fréchet.

The classical contraction mapping theorem in metric spaces, which goes back to Picard, Banach and Caccioppoli, was generalized in many various directions, and in most cases, the following scheme was used to prove the obtained theorems. Consider  a Picard sequence (an orbit of $f$)
 $O(f,x_0):=\{x_0,x_1=f(x_0),\ldots,x_n=f(x_{n-1}),\ldots\,\}$
 and using the fact that  $f$ is contractive in some sense, establish that it is a Cauchy sequence. Using completeness of the space, obtain a point $x$ such that $x_n\to x$ as $n\to\infty.$ Continuity (in some sense) of $f$ gives $f(x)=x$. 
 
The purpose of this note is to discuss the possibility of implementing this scheme in the following abstract situation.
Convergence of sequences $\{x_n\}\subset X$ is defined axiomatically, essentially using an approach that goes back to Fr\'{e}chet~\cite{Frechet} (see also~\cite{Novak}),
but with less requirements on a limiting operator $\Lim$. Continuity (weakly orbital continuity) of a function is then defined as a requirement for operators $\Lim$ and $f$ to commute on orbits of $f$. The family of Cauchy sequences is also defined axiomatically (our approach goes back to~\cite{Irwin}, but with less requirements).
A necessary relation between the family of Cauchy sequences and the $\Lim$  operator is given by the (orbital) completeness requirement --- Cauchy sequences  which are orbit of some $f\colon X\to X$ (actually, even not all -- see e.g.~\cite{Roldan}) must be convergent. A sufficient for fixed point theorems definition of a contractive function is as follows: a function $f$ is locally orbital contractive at a point $x_0\in X$, if the orbit $O(f,x_0)$ is a Cauchy sequence  (this definition will be refined later). Such a definition is motivated by the fact that in the case when the set of Cauchy sequences is determined using a distance function, it is quite consistent with intuitive ideas about contractiveness. Various definitions of contractions can be considered as sufficient conditions for contractiveness in the above presented general sense.
 
In Section~\ref{s::definitions} we give  necessary notations and definitions. There we present the definition and some properties of $\Lim$-spaces, the definition of Cauchy structures, and examples of Cauchy structures defined by a distance-like and a sum-like function taking values in some partially ordered set. An abstract version of a fixed set and point theorem is given in Section~\ref{s::abstractTheorem}.  In Sections~\ref{s::distFuncTheorem}--~\ref{s::otherTheorem2} fixed set and point theorems are presented for above mentioned specific Cauchy structures.

\vspace{0cm}

\section{Notations and definitions}\label{s::definitions}
\subsection{General notations.}
Let $X$ be a nonempty set, $s(X)$ be a set of all sequences $\{ x_n\}$ of elements  from $X$, $s^\infty(X)$ be the set of sequences from $s(X)$ with pair-wise different elements, and $\mathcal{P}(X)$ be the family of all subsets from $X$. 
For $x_1,\ldots,x_n\in X$  by $\langle x_1,\ldots,x_n\rangle$ we  denote the sequence $\{x_1,\ldots,x_n,x_1,\ldots,x_n,\ldots\}$.
For $\{x_n\},\{y_n\}\in s(X)$ we set $\{x_n\}\alt\{y_n\}:=\{ x_1,y_1,x_2,y_2,\ldots,x_n,y_n\ldots\}$.
For a mapping $f\colon X\to X, x_0\in X, n\in\ZZ_+$ set
$
O_n(f,x_0) = \{f^n(x_0), f^{n+1}(x_0),\ldots\}$, $ O(f,x_0)=O_0(f,x_0)=\{ f^n(x_0)\}$, and $O(f)=\{ O_0(f,x_0)\colon x_0\in X\}.$ Finally, let $\mathfrak{N}=\left\{ \{ n_k\}\in s(\NN)\colon n_k\to\infty \;\text{as}\; k\to\infty\right\}$.

\subsection{Limit spaces}
We start with the following definition.
\begin{definition} (cf.~\cite{Frechet,Novak})
A pair $(X,\Lim)$ of a set  $X$ and a mapping $\Lim\colon s(X)\to\mathcal{P}(X)$ such that if $\{{x}_n\}\in s(X)$, then 
$$
\Lim\{{x}_n\}= \Lim\{{x}_{n+1}\},
$$
is called a $\Lim$--space. If for some $\{x_n\}\in s(X)$, $\Lim\, \{x_n\} \neq \emptyset$, then we say that the sequence $\{x_n\}$ is convergent, and write $\{x_n\}\in c(X)$.
\end{definition}

\begin{example}\label{ex::1}
If $X$ is a metric space and for any $\{x_n\}\in s(X)$, $\Lim\,\{x_n\}$ is the set of all partial limits of $\{x_n\}$, then $(X,\Lim)$ is a $\Lim$--space.
\end{example}

\begin{definition} (cf.~\cite{Frechet}). Let a $\Lim$--space $(X,\Lim)$ and $\mathfrak{A}\subset c(X)$
be given. We say that $(X,\Lim)$ is a $\mathfrak{A}$-Fr\'echet space, if for each  $\{x_n\}\in c(X)\cap \mathfrak{A}$ the set $\Lim\,\{x_n\}$ is a singleton. If $\Lim\, \{x_n\}=\{ x\}$, then we say that   $\{x_n\}$ converges to $x$ and  write  $\{x_n\}\to x$ or as $n\to\infty$.
\end{definition}
\begin{definition}
Let a $\Lim$--space $(X,\Lim)$ and a mapping $f\colon X\to X$ be given. We say that $f$ is weakly orbital continuous, if for any $\{x_n\}\in O(f)\cap c(X)$
$$
\Lim \{f(x_n)\}=f(\Lim \{x_n\}).    
$$
\end{definition}
\begin{remark}
The definition of orbital continuity of the mapping $f\colon X\to X$ was introduced in~\cite{ciric}. The definition of weak orbital continuity is less restrictive.
\end{remark}
\subsection{Cauchy structures}
\begin{definition}\label{CS}
A set $\mathfrak{C}\subset s(X)$ is  called a Cauchy structure (we write $\mathfrak{C}\in {CS}$), if 
$\{x_n\}\in \mathfrak{C}\implies \{ x_{n+1}\}\in\mathfrak{C}$.

A Cauchy structure $\mathfrak{C}$ is called a Strong Cauchy structure (we write $\mathfrak{C}\in {SCS}$) if additionally
 $\langle x\rangle\in \mathfrak{C}$ for each $x\in X$, and
 \begin{equation}\label{prop}
 (\langle z_1,\ldots, z_n\rangle\in\mathfrak{C})\implies (z_1=z_n) \text{ for any } z_1\ldots,z_n\in X.
 \end{equation}

A $\Lim$-space $(X,\Lim)$ is called $\mathfrak{C}$-complete if
arbitrary $\{x_n\}\in \mathfrak{C}\cap s^\infty(X)$ is convergent.
\end{definition}
\begin{remark}
Of course, any classical set of Cauchy sequences forms a Cauchy structure.
\end{remark}

\begin{remark}
For generalized metric spaces, the fact that for completeness of a space it is sufficient to require convergence of only Cauchy  sequences with pairwise different elements was noted and used in~\cite{Roldan}.
\end{remark}

\begin{example}
The set $c(X)$ of all convergent sequences in a $\Lim$--space $(X,\Lim)$ can be considered as a Cauchy structure.
\end{example}
\begin{example}
Let a mapping $f\colon X\to X$ be given. The set $\mathfrak{C}=O(f)$ can be considered as a Cauchy structure. In this case $\mathfrak{C}$-complete $\Lim$-space can be called $f$--orbital complete (cf.~\cite{ciric}).
\end{example}

\begin{definition}\label{def::CLim}(cf.~\cite{Irwin})
Setting  $
\mathfrak{C}\Lim\{x_n\}:=\{ x\in X\colon  \langle x\rangle\alt\{x_n\}\in \mathfrak{C}\},
$
we transform a Cauchy space $(X,\mathfrak{C})$ to a $\Lim$--space $(X,\mathfrak{C}\Lim)$.
\end{definition}

If $\mathfrak{C}\in SCS$, and additionally the following condition is satisfied: for all $\{x_n\},\{z_n\}\in s(X)$ and $\{y_n\}\in\mathfrak{C}$, 
\begin{equation}\label{FWcondition}
\{x_n\}\alt\{y_n\},\;\{y_n\}\alt\{z_n\}\in\mathfrak{C}\implies \{x_n\}\alt\{z_n\}\in\mathfrak{C},
\end{equation} 
then, as it is easily seen, the space $(X,\mathfrak{C}\Lim)$ is a $\mathfrak{C}$-Fr\'echet space. Condition~\eqref{FWcondition} is a generalization of well-known Fr\'{e}chet--Wilson conditions, see e.g.~\cite{Wilson}.

Generalizing the definition of the Cauchy sequence in a uniform space, (see e.~g.~\cite[Chapter~6]{Kelley}) we obtain the following example of a Cauchy structure in the sense of definition~\ref{CS}.

Recall (see e.g.~\cite[Chapter~3.2]{zorich}) that a family $\mathcal{U}$ of non-empty subsets of a set $X$ is called a base in $X$, if for any $U,V\in \mathcal{U}$ there exists $W\in\mathcal{U}$ such that $W\subset U\cap V$.

\begin{example}\label{ex::uniformStructure}
 Let $X$ be a nonempty set and $\mathcal{U}$ be a base in $X^2$. The class of all sequences $\{ x_n\}\in s(X)$ such that for any $U\in \mathcal{U}$ there exists $N\in\NN$ with the property $m,n\ge N\implies (x_m,x_n)\in U$, forms a Cauchy structure  $\mathfrak{C}_{\mathcal{U}}$. It is clear that
\begin{multline*}
  \mathfrak{C}_{\mathcal{U}}\Lim\{ x_n\}=\{ x\in X\colon \{ x_n\}\in \mathfrak{C}_{\mathcal{U}} \text{ and } \forall U\in\mathcal{U} \;\exists N\in\NN 
  \\
  \text{ such that }(x_n,x),(x,x_n)\in U \text{ for any } n\ge N\}.  
\end{multline*}
\end{example}

\subsection{Cauchy structures defined by distance functions}
\noindent Let $X$ be a set and $(Y, \Lim_Y)$ be a $\Lim$--space.
\begin{definition}
A mapping $d\colon X^2\to Y$ is called an $Y$-valued distance, if for all $x,y\in X$
\begin{equation}\label{distance}
    d(x,y)=d(y,x)=d(x,x)=d(y,y)\implies x=y.
\end{equation}
\end{definition}

\begin{definition}
The family $\mathfrak{C}_d$ that consists of all sequences $\{x_n\}\in s(X)$ such that $$\{d(x_{m_k},x_{n_k})\}\in c(Y)$$ for any sequences $\{ m_k\},\{ n_k\}\in \mathfrak{N}$, and $\Lim_Y\{d(x_{m_k},x_{n_k})\}$ does not depend on $\{ m_k\}$ and $\{ n_k\}$ is a Cauchy structure.
\end{definition}

\begin{lemma}
Let $(Y, \Lim_Y)$ be such that $\langle \alpha\rangle, \langle \beta\rangle\in c(Y)$ for any $\alpha,\beta\in Y$, and
\begin{equation}\label{add_prop}
\Lim_Y\langle \alpha\rangle=\Lim_Y\langle \beta\rangle \implies \alpha=\beta.
\end{equation}
Then $\mathfrak{C}_d\in SCS$.
\end{lemma}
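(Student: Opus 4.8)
The plan is to verify directly the two conditions that a Cauchy structure must satisfy in order to be a strong one, namely $\langle x\rangle\in\mathfrak{C}_d$ for every $x\in X$ and the implication \eqref{prop}; the fact that $\mathfrak{C}_d$ is itself a Cauchy structure is already recorded above. For the first condition, fix $x\in X$ and observe that for the constant sequence $\{x_n\}=\langle x\rangle$ and arbitrary $\{m_k\},\{n_k\}\in\mathfrak{N}$ one has $d(x_{m_k},x_{n_k})=d(x,x)$, so $\{d(x_{m_k},x_{n_k})\}=\langle d(x,x)\rangle$. By the hypothesis on $(Y,\Lim_Y)$ we have $\langle d(x,x)\rangle\in c(Y)$, and its $\Lim_Y$ plainly does not depend on $\{m_k\}$ or $\{n_k\}$; hence $\langle x\rangle\in\mathfrak{C}_d$.

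For \eqref{prop}, assume $\langle z_1,\dots,z_n\rangle\in\mathfrak{C}_d$ and set $\{x_j\}=\langle z_1,\dots,z_n\rangle$, so that $x_j=z_j$ for $1\le j\le n$ and $x_j=x_{j+n}$ for all $j$. The key step is to feed the ``independence of the limit'' part of the definition of $\mathfrak{C}_d$ several well-chosen pairs of arithmetic progressions from $\mathfrak{N}$. Taking $m_k=(k-1)n+1$ and $n_k=kn$ gives the constant sequence $\{d(x_{m_k},x_{n_k})\}=\langle d(z_1,z_n)\rangle$; swapping the roles of the two progressions gives $\langle d(z_n,z_1)\rangle$; taking $m_k=n_k=(k-1)n+1$ gives $\langle d(z_1,z_1)\rangle$; and taking $m_k=n_k=kn$ gives $\langle d(z_n,z_n)\rangle$. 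Since $d(z_i,z_j)\in Y$, each of these constant sequences belongs to $c(Y)$ by the hypothesis on $(Y,\Lim_Y)$, and because $\langle z_1,\dots,z_n\rangle\in\mathfrak{C}_d$ the independence clause forces their four $\Lim_Y$'s to coincide. Condition \eqref{add_prop} then yields $d(z_1,z_n)=d(z_n,z_1)=d(z_1,z_1)=d(z_n,z_n)$, and the distance axiom \eqref{distance} gives $z_1=z_n$, which is exactly \eqref{prop}.

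I do not expect a genuine obstacle here: the work is entirely in choosing the index subsequences from $\mathfrak{N}$ that isolate the four relevant values of $d$, after which \eqref{add_prop} and \eqref{distance} finish the argument. The only points that need a little care are the periodic-indexing bookkeeping — checking that each arithmetic progression tends to infinity and lands on the intended entry of $\langle z_1,\dots,z_n\rangle$ — and the small observation that the hypothesis $\langle\alpha\rangle\in c(Y)$ is precisely what guarantees that the constant $Y$-sequences above are convergent, so that \eqref{add_prop} is legitimately applicable.
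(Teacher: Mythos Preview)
Your proposal is correct and follows essentially the same route as the paper's proof: verify $\langle x\rangle\in\mathfrak{C}_d$ via the constant $Y$-sequence $\langle d(x,x)\rangle$, then for $\langle z_1,\dots,z_n\rangle\in\mathfrak{C}_d$ extract the four constant sequences $\langle d(z_1,z_n)\rangle$, $\langle d(z_n,z_1)\rangle$, $\langle d(z_1,z_1)\rangle$, $\langle d(z_n,z_n)\rangle$, apply \eqref{add_prop}, and finish with \eqref{distance}. The only difference is that you spell out explicitly the arithmetic progressions in $\mathfrak{N}$ that the paper leaves implicit.
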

\begin{proof}
Really, $\langle x\rangle\in \mathfrak{C}_d$, and if $\langle z_1,z_2,\ldots, z_n\rangle\in \mathfrak{C}_d$, then $
\Lim_Y\{ d(z_1,z_n)\}=\Lim_Y \{ d(z_1,z_1)\}=\Lim_Y\{ d(z_n,z_n)\}=\Lim_Y\{ d(z_n,z_1)\}.
$ Due to property~\eqref{add_prop}, we obtain
$
 d(z_1,z_n)= d(z_1,z_1)= d(z_n,z_n)= d(z_n,z_1),
$
and due to~\eqref{distance}, $z_1=z_n$.
Hence  $\mathfrak{C}_d$ satisfies~\eqref{prop}.
\end{proof}
\begin{remark}
The $\Lim$--spaces from Example~\ref{ex::1} satisfy property~\eqref{add_prop}.
\end{remark}

It follows from Definition~\ref{def::CLim} that for any $\{ x_n\}\in\mathfrak{C}_d$
\begin{multline*}
\mathfrak{C}_d\Lim\{ x_n\}\subset\bigcap\limits_{\{ m_k\},\{ n_k\}\in \mathfrak{N}}\{ x\in X\colon \Lim_Y\{ d(x_{m_k},x)\}\\=\Lim_Y\{ d(x,x_{n_k})\}=\Lim_Y\{ d(x_{n_k},x_{m_k})\}=\Lim_Y\langle d(x,x)\rangle\},
\end{multline*}
which is consistent with the definition of convergence in almost all previously considered spaces, in particular in usual metric spaces, in partial metric spaces~\cite{Matthews}, dualistic partial metric spaces~\cite{O'Neill}, dislocated metric spaces~\cite{Hitzler}, metric-like spaces~\cite{Amini}, distance spaces~\cite{Kirk}, metric and distance spaces with more general than $\RR$ sets of values of a distance function 
($K$-metric spaces, cone metric spaces, $M$-distance spaces, probabilistic metric spaces, fuzzy metric spaces, and others -- see~\cite{Babenko_M_dist} and references therein).
        
\subsection{Another way to define Cauchy structures.}
 Let $(Y,\le)$ be a partially ordered set and $\psi\colon\bigcup_{n\in\NN}Y^n\to Y$ be a function such that for any $y_1,y_2,\ldots,y_n\in Y$
\begin{equation}\label{psi}
\psi(y_2,\ldots,y_n)\le\psi(y_1,y_2,\ldots,y_n).
\end{equation}
\begin{example}
In the case $Y=\RR_+$, the function
$
\psi(y_1,y_2,\ldots,y_n)=\sum_{k=1}^ny_k
$
satisfies~\eqref{psi}. Moreover, it is coordinate-wise monotone i.e., 
$$
\psi(y_1,y_2,\ldots,y_n)\leq \psi(z_1,z_2,\ldots,z_n),
$$
provided $y_i\leq z_i$, $i=1,\ldots n$, $n\in\NN$, and for all $y_1,\ldots, y_{m+n}\in Y$,
$$ 
\psi(y_1,\ldots,y_m,y_{m+1},\ldots, y_{m+n})= \psi(y_1,\ldots,y_m,\psi(y_{m+1},\ldots, y_{m+n})).
$$
\end{example}

We say that a set $A\subset Y$ is bounded from above, if there exists $\alpha\in Y$ such that $a\leq \alpha$  for all $a\in A$.  
\begin{definition}
The following set forms a Cauchy structure:
$$
\mathfrak{C}_{\psi} =\left\{ \{ x_n\}\in s(X)\colon \{\psi(x_1,x_2,\ldots,x_n)\}\text{ is bounded from above}\right\}.
$$
\end{definition}

A partial case of the family $\mathfrak{C}_{\psi}$ can be obtained as follows.
\begin{definition}
Let an arbitrary function $d\colon X^2\to Y$ be given. The following set forms a Cauchy structure:
\begin{multline*}
\mathfrak{C}_{\psi,d}=\Big\{ \{ x_n\}\in s(X)\colon \text{the sequence } \\ \big\{ \psi(d(x_1,x_2),\ldots,d(x_{n-1},x_n))\big\}\text{ is bounded from above}\Big\}.
\end{multline*}
\end{definition}

\section{Fixed sets and points theorems}
\subsection{A general fixed set and point theorem}\label{s::abstractTheorem}

\begin{definition}
A set $A\subset X$ is called a fixed set of a mapping $f\colon X\to X$, if $f(A)= A$.  If a singleton $A=\{ x\}$ is a fixed set, then we say that $x$ is a fixed point of the mapping $f$.
\end{definition}

Before stating the theorem, we discuss the question of how  the contractiveness of a mapping could be understood in an abstract situation. Let's start with the observation that contractiveness usually allows one to establish the fact that an orbit $O(f,x_0)$ is a Cauchy sequence.
On the other hand, if the property of being a Cauchy sequence is defined using some distance function, then it means that the elements of the sequence become arbitrarily close as their indices increase. This is quite consistent with intuitive ideas (at least ours) about contractiveness. This is also observed in other cases (see, for example, Example~\ref{ex::uniformStructure}). Therefore, in our opinion, in the abstract situation under consideration, it is natural to adopt the following definition.
\begin{definition}
Let a mapping $f\colon X\to X$ and $x_0\in X$ be such that 
$$
O(f,x_0)\in (\mathfrak{C}\cap s^\infty(X))\cup (s(X)\setminus s^\infty (X)).
$$
Then $f$ will be called locally orbital contractive at the point $x_0$.
\end{definition}

\begin{theorem}\label{th::metaTheorem}
Let a $\Lim$--space $(X,\Lim)$, a set $ \mathfrak{C}\in CS$ be given, and $(X,\Lim)$ be $\mathfrak{C}$-complete. Let also a weakly orbital continuous and locally orbital contractive at some point $x_0\in X$ mapping $f\colon X\to X$  be given. Then $f$ has a fixed set. 

If $\mathfrak{C}\in SCS$, the space $(X,\Lim)$ is a $\mathfrak{C}$-Fr\'echet space,
and $O(f,x_0)\in\mathfrak{C}$, then $f$ has a fixed point. 
If $\{ f^n(x)\} \alt \{ f^n(y)\}\in\mathfrak{C}$ for any $x,y\in X$, then the fixed point is unique. 
\end{theorem}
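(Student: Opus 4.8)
The hypothesis ``$f$ is locally orbital contractive at $x_0$'' says that the orbit $O(f,x_0)=\{f^n(x_0)\}$ either belongs to $\mathfrak{C}\cap s^\infty(X)$ or is not in $s^\infty(X)$, and the argument will branch on these two cases throughout; in each case the desired object is produced by combining $\mathfrak{C}$-completeness, weak orbital continuity, and the shift axiom of a $\Lim$-space. The degenerate case (non-injective orbit) is handled separately and, under the stronger hypotheses, is collapsed to a genuine fixed point via~\eqref{prop}.

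\textbf{Step 1: a fixed set.} If $O(f,x_0)\notin s^\infty(X)$, then $f^{i}(x_0)=f^{i+p}(x_0)$ for some $i\ge 0$ and $p\ge 1$; iterating $f$ gives $f^{k}(x_0)=f^{k+p}(x_0)$ for all $k\ge i$, so that $A:=\{f^{i}(x_0),\dots,f^{i+p-1}(x_0)\}$ is a nonempty set which $f$ permutes cyclically, $f(A)=A$. If instead $O(f,x_0)\in\mathfrak{C}\cap s^\infty(X)$, then $\mathfrak{C}$-completeness gives $O(f,x_0)\in c(X)$, so $A:=\Lim\{f^n(x_0)\}\ne\emptyset$. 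Since $\{f^n(x_0)\}\in O(f)\cap c(X)$, weak orbital continuity yields $\Lim\{f(f^n(x_0))\}=f(A)$; but $\{f(f^n(x_0))\}=\{f^{n+1}(x_0)\}$, so the shift axiom gives $\Lim\{f(f^n(x_0))\}=\Lim\{f^n(x_0)\}=A$. Hence $f(A)=A$, and $f$ has a fixed set in both cases.

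\textbf{Step 2: a fixed point.} Assume now $\mathfrak{C}\in SCS$, $(X,\Lim)$ is $\mathfrak{C}$-Fr\'echet, and $O(f,x_0)\in\mathfrak{C}$. If $O(f,x_0)\notin s^\infty(X)$, take $i,p$ as in Step~1; iterating the defining implication of a Cauchy structure on $O(f,x_0)\in\mathfrak{C}$ gives $O_i(f,x_0)\in\mathfrak{C}$, and by periodicity $O_i(f,x_0)=\langle f^{i}(x_0),\dots,f^{i+p-1}(x_0)\rangle$. If $p=1$ then $f^{i}(x_0)$ is already fixed; if $p\ge 2$ then~\eqref{prop} forces $f^{i}(x_0)=f^{i+p-1}(x_0)$, whence $f(f^{i}(x_0))=f^{i+p}(x_0)=f^{i}(x_0)$. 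If $O(f,x_0)\in s^\infty(X)$, then as in Step~1 it lies in $c(X)\cap\mathfrak{C}$, so the $\mathfrak{C}$-Fr\'echet property makes $\Lim\{f^n(x_0)\}=\{x\}$ a singleton, and the computation of Step~1 gives $\{x\}=f(\{x\})=\{f(x)\}$, i.e.\ $f(x)=x$. Finally, if $x$ and $y$ are fixed points, then $\{f^n(x)\}=\langle x\rangle$ and $\{f^n(y)\}=\langle y\rangle$, so $\{f^n(x)\}\alt\{f^n(y)\}=\langle x,y\rangle$, which is in $\mathfrak{C}$ by hypothesis; since $\mathfrak{C}\in SCS$, applying~\eqref{prop} with $n=2$ gives $x=y$.

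\textbf{Main obstacle.} There is no deep difficulty: the statement is essentially a bookkeeping assembly of the definitions. Still, two points deserve care. First, $\Lim$ is set-valued and weak orbital continuity imposes no uniqueness of limits, yet it already pins $\Lim\{f^n(x_0)\}$ down as a fixed set as soon as it is nonempty, precisely because $\{f(f^n(x_0))\}$ is the shift of $\{f^n(x_0)\}$ and the $\Lim$-space axiom identifies their limit sets. Second, one must not overlook the degenerate case in which the orbit fails to be injective; in the plain $\Lim$-space setting it only yields a possibly multi-point periodic fixed set, whereas under the $SCS$/Fr\'echet hypotheses condition~\eqref{prop} is exactly what collapses such a cycle to a genuine fixed point.
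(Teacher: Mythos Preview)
Your proof is correct and follows essentially the same approach as the paper: split on whether the orbit lies in $s^\infty(X)$, use $\mathfrak{C}$-completeness together with the shift axiom and weak orbital continuity in the injective case, exhibit the finite cycle in the non-injective case, and then collapse the cycle to a single fixed point via property~\eqref{prop} under the $SCS$ hypothesis. Your write-up is in fact slightly more explicit than the paper's in justifying why the tail $O_i(f,x_0)$ lies in $\mathfrak{C}$ and equals $\langle f^i(x_0),\dots,f^{i+p-1}(x_0)\rangle$.
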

\begin{proof}
 Let $\mathfrak{C}\in CS$. Assume $\{ f^n(x_0)\}=O(f,x_0)\in \mathfrak{C}\cap s^\infty(X)$. Since the space $(X,\Lim)$ is $\mathfrak{C}$-complete, $\Lim\{ f^n(x_0)\}\neq \emptyset$. Due to weakly orbital continuity of  $f$, we obtain
 \[
     \Lim\{ f^{n}(x_0)\}=\Lim\{ f^{n+1}(x_0)\}=\Lim \{ f(f^{n}(x_0))\}=f(\Lim\{ f^{n}(x_0)\}).
 \]
Thus $\Lim\{ f^{n}(x_0)\}$ is a fixed set of $f$.

If $\{ f^{n}(x_0)\}\in s(X)\setminus s^\infty (X)$, then for some $k<l$ one has $f^k(x_0)=f^l(x_0)$ and hence  $\{ f^k(x_0),f^{k+1}(x_0),\ldots,f^{l-1}(x_0)\}$ is a fixed set.

Let now $\mathfrak{C}\in SCS$, and let $\{ f^n(x_0)\}\in\mathfrak{C}$. If $\{ f^n(x_0)\}\in\mathfrak{C}\cap s^\infty(X)$, then there exits $x\in X$ such that $\Lim\{ f^{n}(x_0)\}=\{ x\}$, and hence $x$ is a fixed point of $f$.
 If $\{ f^n(x_0)\}\in\mathfrak{C}\setminus s^\infty(X)$ and $f^k(x_0)=f^l(x_0), k<l$, then in the case
 $l=k+1$, $f^k(x_0)$ is a fixed point of $f$. 
If $l> k+1$, then the sequence
$
\langle f^k(x_0),f^{k+1}(x_0),\ldots,f^{l-1}(x_0)\rangle 
$
also belongs to $\mathfrak{C}$. By property~\eqref{prop}, $f^l(x_0) = f^{k}(x_0) = f^{l-1}(x_0)$ i.e., $f^{l-1}(x_0)$  is a fixed point of $f$.

Let for any $x,y\in X$ $\{ f^n(x)\} \alt \{ f^n(y)\}\in\mathfrak{C}$. Assume that $x,y$ are fixed points of the function $f$. Using  property~\eqref{prop}, we obtain 
$$
\{ f^n(x)\} \alt \{ f^n(y)\}=\langle x\rangle\alt\langle y\rangle=\langle x,y\rangle \in\mathfrak{C}\implies x=y
$$
i.e., the fixed point is unique. 
\end{proof}

\subsection{A fixed set and point theorem in $\mathfrak{C}_d$-complete $\Lim$-spaces.}\label{s::distFuncTheorem} Let $Y$  be a  $\Lim_Y$--space and at the same time a partially ordered set with a partial order  $\le$. We assume that the partial order and the operator $\Lim_Y$ agree in the following sense: if two sequences $\{\alpha_n\},\{\beta_n\}\in c(Y)$ have equal limits, and $\alpha_n\le\gamma_n\le \beta_n$ for each $n\in\NN$, then $\{\gamma_n\}\in c(Y)$ and
\[
\Lim_Y\{\gamma_n\}=\Lim\{\alpha_n\}=\Lim\{\beta_n\}.
\]
Assume that a $\Lim$-space $(X,\Lim)$ and $Y$--valued distance function  $d$ in $X$ be such that $(X,\Lim)$ is $\mathfrak{C}_d$-complete. 

We say that a set $A\subset X$ is bounded, if
there exist $\alpha,\beta\in Y$ such that $\alpha\leq d(x,y)\leq \beta$ for all $x,y\in A$.

Denote by $\Lambda(Y)$ the set of all  non-decreasing mappings $\lambda\colon Y\to Y$ such that for some non-empty set $\Lim(\lambda)\subset Y$ and arbitrary  $\alpha\in Y$,  $\Lim_Y\{\lambda^n(\alpha)\}=\Lim(\lambda)$. Observe that if $Y$ is an ordinary metric space and $\lambda$ is a contractive in the usual sense mapping, then $\lambda\in \Lambda(Y)$, and $\Lim(\lambda)$ is a singleton.
\begin{theorem}
Assume that for a mapping  $f\colon X\to X$ there exists a point $x_0\in X$ such that the orbit $O(f,x_0)$ is bounded, and two mappings $\lambda_1,\lambda_2\in \Lambda(Y)$ are such that $\Lim(\lambda_1) = \Lim(\lambda_2)$. 
If for arbitrary $x,y\in O_n(f,x_0)$ one can find $x',y',x'',y''\in O_{n-1}(f,x_0)$ 
for which
\begin{equation}\label{contr}
    \lambda_1(d(x',y'))\le d(x,y)\le \lambda_2(d(x'',y'')),
\end{equation}
then the mapping $f$ has a fixed set. If the space $(X,\Lim)$ is $\mathfrak{C}_d$-Fr\'{e}chet, then $f$ has a fixed point.
\end{theorem}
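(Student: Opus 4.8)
The idea is to derive the statement from the general Theorem~\ref{th::metaTheorem}, applied with $\mathfrak{C}=\mathfrak{C}_d$. Completeness is assumed; the other requirements of that theorem (weak orbital continuity of $f$, and for the second conclusion the $\mathfrak{C}_d$-Fr\'echet property of $(X,\Lim)$, which is assumed there) are in force in the present setting, so the whole burden is to check that $f$ is \emph{locally orbital contractive} at $x_0$, i.e.\ that $O(f,x_0)\in\mathfrak{C}_d$ whenever its terms are pairwise distinct. If instead $f^k(x_0)=f^l(x_0)$ for some $k<l$, then $\{f^k(x_0),\dots,f^{l-1}(x_0)\}$ is already a fixed set and this eventually periodic case is disposed of exactly as in the proof of Theorem~\ref{th::metaTheorem}; so assume $O(f,x_0)\in s^\infty(X)$.

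\textbf{Iterating~\eqref{contr}.} First I would iterate the contraction condition. By induction on $r$, using that $\lambda_1$ and $\lambda_2$ are non-decreasing: for every $n$, every $r\le n$ and every $x,y\in O_n(f,x_0)$ there exist $u,v,u',v'\in O_{n-r}(f,x_0)$ with $\lambda_1^{\,r}(d(u,v))\le d(x,y)\le\lambda_2^{\,r}(d(u',v'))$. Since $O_{n-r}(f,x_0)\subset O(f,x_0)$ and the orbit is bounded, say $\alpha\le d(\cdot,\cdot)\le\beta$ on $O(f,x_0)$, applying $\lambda_1^{\,r}$, $\lambda_2^{\,r}$ to these inequalities (again using monotonicity) yields the clean two-sided estimate
\[
\lambda_1^{\,r}(\alpha)\ \le\ d(x,y)\ \le\ \lambda_2^{\,r}(\beta)\qquad\text{for all }x,y\in O_n(f,x_0)\text{ and all }r\le n.
\]

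\textbf{Membership in $\mathfrak{C}_d$.} Fix $\{m_k\},\{n_k\}\in\mathfrak{N}$ and set $N_k=\min\{m_k,n_k\}\to\infty$; then $f^{m_k}(x_0),f^{n_k}(x_0)\in O_{N_k}(f,x_0)$, whence $\lambda_1^{\,N_k}(\alpha)\le d(f^{m_k}(x_0),f^{n_k}(x_0))\le\lambda_2^{\,N_k}(\beta)$. Because $\lambda_1,\lambda_2\in\Lambda(Y)$ and $\Lim(\lambda_1)=\Lim(\lambda_2)$, the two outer sequences converge in $Y$ to the common value $\Lim(\lambda_1)$, and the assumed agreement between $\le$ and $\Lim_Y$ (the abstract squeeze) forces $\{d(f^{m_k}(x_0),f^{n_k}(x_0))\}_k$ to converge to $\Lim(\lambda_1)$, independently of $\{m_k\}$ and $\{n_k\}$. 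Hence $O(f,x_0)\in\mathfrak{C}_d$, and Theorem~\ref{th::metaTheorem} delivers a fixed set. If moreover $(X,\Lim)$ is $\mathfrak{C}_d$-Fr\'echet, then $\{f^n(x_0)\}\in\mathfrak{C}_d\cap c(X)$ has a singleton limit $\{z\}$, and the fact that $\{z\}$ is a fixed set means $f(z)=z$.

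\textbf{Main obstacle.} The delicate point is the step from the ``diagonal'' bounds, which are controlled only along the full sequences $\{\lambda_i^{\,n}(\cdot)\}_n$, to convergence of $\{d(f^{m_k}(x_0),f^{n_k}(x_0))\}_k$ along arbitrary index sequences from $\mathfrak{N}$: one must feed the squeeze hypothesis with the correct term-by-term indexing $N_k$, and some care is needed since a general operator $\Lim_Y$ need not be stable under passing to subsequences, so one should verify that $\{\lambda_i^{\,N_k}(\cdot)\}_k$ is still $c(Y)$-convergent to $\Lim(\lambda_i)$ (immediate in all the concrete instances, e.g.\ when $\Lim_Y$ is the partial-limit operator). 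A minor point to get right is the legitimacy of inserting $\alpha\le d(\cdot,\cdot)\le\beta$ inside the iterated $\lambda_i$, which is exactly where non-decreasingness of $\lambda_1,\lambda_2$ is used.
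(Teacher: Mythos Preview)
Your proposal is correct and follows essentially the same route as the paper: iterate~\eqref{contr} using monotonicity and boundedness to get the two-sided bound $\lambda_1^{\min(m_k,n_k)}(\alpha)\le d(x_{m_k},x_{n_k})\le\lambda_2^{\min(m_k,n_k)}(\beta)$, invoke the squeeze hypothesis to conclude $O(f,x_0)\in\mathfrak{C}_d$, and then apply Theorem~\ref{th::metaTheorem}. The subtlety you flag in your ``Main obstacle'' paragraph --- that the definition of $\Lambda(Y)$ controls only the full sequence $\{\lambda_i^n(\cdot)\}_n$, while the squeeze must be fed with the reindexed sequence $\{\lambda_i^{N_k}(\cdot)\}_k$ for $N_k=\min(m_k,n_k)$ --- is a genuine point that the paper's proof passes over without comment; you are right that in the generality stated this step needs either an additional stability assumption on $\Lim_Y$ or a tacit restriction to concrete instances where it is automatic.
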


\begin{remark}
Contraction conditions~\eqref{contr} given in this theorem are a generalization of the \'Ciri\'c conditions~\cite{ciric71,Ciric74}.
\end{remark}

\begin{proof} For arbitrary sequences $\{ m_k\},\{ n_k\}$ that tend to infinity, from condition~\eqref{contr} and boundedness of $O(f,x_0)$ we obtain
\[
\lambda_1^{\min (m_k,n_k)}(\alpha)\le d(x_{m_k},x_{n_k})\le \lambda_2^{\min(m_k,n_k)}(\beta),
\]
where $\alpha,\beta\in Y$ are the elements from the definition of boundedness.  Since $\min(m_k,n_k)\to\infty$ whenever $k\to\infty$, we obtain that $\Lim\{d(x_{m_k}, x_{n_k})\} = \Lim(\lambda_1) = \Lim(\lambda_2)$, and hence  $O(f,x_0)\in\mathfrak{C}_d$. Application of Theorem~\ref{th::metaTheorem} finishes the proof.
\end{proof}

\subsection{A fixed set and point theorem in $\mathfrak{C}_{\psi}$--complete $\Lim$-spaces.}\label{s::otherTheorem1}
The following theorem is a variant of the Caristi theorem (see e.g.~\cite{kirk_shahzad} and references therein).
\begin{theorem}
Let $Y$ be a partially ordered set, and a $\Lim$--space $(X,\Lim)$ be  $\mathfrak{C}_{\psi}$-- complete, where $\psi$ is a mapping that satisfies property~\eqref{psi}. Assume that there exists a function $\overline{\psi}\colon\left(\bigcup_n X^n\right)\times Y\to Y$ and such that for  arbitrary $x_1,\ldots, x_{n}\in X$ and $y,z\in Y$,
\begin{equation}\label{psiMajorant}
    \psi(x_1,\ldots, x_n) \leq \overline{\psi}(x_1,\ldots, x_n,y),
\end{equation}
$$ 
\overline{\psi}(x_1,\ldots,x_n, y)\le \overline{\psi}(x_1,\ldots,x_{n-1},\overline{\psi}(x_{n},y)),
$$
and 
$$ 
y\leq z\implies \overline{\psi}(x_1,\ldots,x_n, y)\le \overline{\psi}(x_1,\ldots,x_{n},z). 
$$
If $f\colon X\to X$ is a weakly orbitally continuous function and there exists $\phi\colon X\to Y$ such that for some $x_0\in X$
\begin{equation}\label{caristiContraction}
\overline{\psi}(x,\phi(f(x)))\le \phi(x) \text{ for all } x\in O(f,x_0),
\end{equation}
then $f$ has a fixed set. If the space $(X,\Lim)$ is $\mathfrak{C}_d$-Fr\'{e}chet, then $f$ has a fixed point.
\end{theorem}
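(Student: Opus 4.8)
The plan is to derive everything from Theorem~\ref{th::metaTheorem}, the one non-routine point being to verify that the orbit $O(f,x_0)$ is a Cauchy sequence for the structure $\mathfrak{C}_\psi$. Put $a_k=f^k(x_0)$, so that $O(f,x_0)=\{a_0,a_1,a_2,\dots\}$; regarded as a sequence $\{x_n\}$ it has $x_n=a_{n-1}$, so $O(f,x_0)\in\mathfrak{C}_\psi$ means exactly that $\{\psi(a_0,a_1,\dots,a_{n-1})\}_{n\ge1}$ is bounded from above in $Y$. I will show that $\phi(a_0)$ is such an upper bound.

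The heart of the argument is an order-theoretic telescoping. For a fixed $n\ge1$ I would first establish the one-step inequality
\[
\psi(a_0,\dots,a_{n-1})\;\le\;\overline{\psi}(a_0,\dots,a_{n-1},\phi(a_n))\;\le\;\overline{\psi}(a_0,\dots,a_{n-2},\phi(a_{n-1})),
\]
where the first $\le$ is~\eqref{psiMajorant} applied with $y=\phi(a_n)$, and the second is obtained by applying, in turn, the superadditivity property of $\overline{\psi}$ (to split off the last $X$-coordinate), the Caristi-type inequality~\eqref{caristiContraction} at the point $a_{n-1}\in O(f,x_0)$ in the form $\overline{\psi}(a_{n-1},\phi(f(a_{n-1})))\le\phi(a_{n-1})$, and finally monotonicity of $\overline{\psi}$ in its last argument. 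Repeating exactly the same three-step reduction on $\overline{\psi}(a_0,\dots,a_{k-1},\phi(a_k))$ for $k=n-1,n-2,\dots,2$, and finishing with $\overline{\psi}(a_0,\phi(a_1))=\overline{\psi}(a_0,\phi(f(a_0)))\le\phi(a_0)$ from~\eqref{caristiContraction} at $a_0$, I obtain $\psi(a_0,\dots,a_{n-1})\le\phi(a_0)$. Since $n$ was arbitrary, $O(f,x_0)\in\mathfrak{C}_\psi$.

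With this in hand the theorem follows from Theorem~\ref{th::metaTheorem}. Indeed, either $O(f,x_0)\in\mathfrak{C}_\psi\cap s^\infty(X)$ or $O(f,x_0)\notin s^\infty(X)$, so $f$ is locally orbital contractive at $x_0$; since $f$ is weakly orbitally continuous and $(X,\Lim)$ is $\mathfrak{C}_\psi$-complete, the first assertion of Theorem~\ref{th::metaTheorem} gives a fixed set, and if moreover $\mathfrak{C}_\psi\in SCS$ and $(X,\Lim)$ is $\mathfrak{C}_\psi$-Fr\'echet, then, as $O(f,x_0)\in\mathfrak{C}_\psi$, its second assertion gives a fixed point. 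I expect the only delicate point to be keeping the bookkeeping straight in the iterated reduction: the lengths of the tuples passed to $\overline{\psi}$ and the index at which~\eqref{caristiContraction} is invoked must stay synchronized as coordinates are peeled off one at a time. Conceptually there is no obstacle, the estimate being a purely monotone telescoping that never uses any convergence in $Y$.
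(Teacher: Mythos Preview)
Your proposal is correct and follows essentially the same approach as the paper: both establish $\overline{\psi}(a_0,\dots,a_{n-1},\phi(a_n))\le\phi(a_0)$ by iterating the three-step reduction (superadditivity, then~\eqref{caristiContraction}, then monotonicity in the last slot), conclude via~\eqref{psiMajorant} that $O(f,x_0)\in\mathfrak{C}_\psi$, and invoke Theorem~\ref{th::metaTheorem}. The only cosmetic difference is that the paper phrases the telescoping as a forward induction on $n$, whereas you peel coordinates off from the top; the content is identical.
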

\begin{proof}
Let $x_n = f(x_{n-1})$, $n\in\NN$. We prove by induction on $n$ that 
$$
\overline{\psi}(x_{0},x_1,\ldots,x_{n-1},\phi(x_{n}))\leq \phi(x_0).
$$
For $n = 1$ the inequality follows from~\eqref{caristiContraction}. Assume it holds for some $n = k\geq 1$. Then for $n = k+1$, we obtain
\begin{multline*}
 \overline{\psi}(x_{0},x_1,\ldots,x_{k},\phi(x_{k+1}))
 \le 
 \overline{\psi}(x_{0},x_1,\ldots,x_{k-1},\overline{\psi}(x_{k},\phi(x_{k+1})) )
 \\
 \le
 \overline{\psi}(x_{0},x_1,\ldots,x_{k-1},\phi(x_{k})) 
 \le
 \phi(x_0),
\end{multline*}
which finishes the induction step. Inequality~\eqref{psiMajorant} implies that $O(f,x_0)\in \mathfrak{C}_\psi$, and in order to finish the proof, it is sufficient to apply Theorem~\ref{th::metaTheorem}.
\end{proof}

\subsection{Fixed set and point theorems in $\mathfrak{C}_{\psi,d}$--complete $\Lim$-spaces.}\label{s::otherTheorem2}
The following theorem uses a generalization of the \'Ciri\'c-type contraction condition.
\begin{theorem}
Let a $\Lim$--space $(X,\Lim)$ be  $\mathfrak{C}_{\psi,d}$ complete, where $\psi$ is a coordinate-wise non-decreasing mapping that satisfies property~\eqref{psi}, $d\colon X^2\to Y$ is some function, and $Y$ is a partially ordered set. Assume that there exists a non-decreasing $\lambda\colon Y\to Y$ such that for each $y\in Y$ the sequence
\[
\{\psi(y,\lambda(y),\ldots,\lambda^n(y))\}
\]
is bounded from above, $x_0\in X$, and a weakly orbital continuous mapping $f\colon X\to X$ are such that for all 
$n>1$ and all $x,y\in O_n(f,x_0)$ there exist $x',y'\in O_{n-1}(f,x_0)$ that satisfy the inequality
\begin{equation}\label{ciricContraction}
d(x,y)\le \lambda(d(x',y')).
\end{equation}
If there exists $\alpha\in Y$ such that $d(x,y)\leq \alpha$ for all $x,y\in O(f,x_0)$, then  $f$ has a fixed set. If the space $(X,\Lim)$ is $\mathfrak{C}_d$-Fr\'{e}chet, then $f$ has a fixed point.
\end{theorem}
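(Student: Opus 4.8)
The plan is to reuse the pattern of the two preceding proofs: verify that the orbit $O(f,x_0)$ lies in the Cauchy structure $\mathfrak{C}_{\psi,d}$ and then apply Theorem~\ref{th::metaTheorem}. Throughout I write $x_n=f^n(x_0)$, $n\in\ZZ_+$.

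The core step is a descent estimate obtained by iterating~\eqref{ciricContraction}. First I would show that for every $n\ge 1$ and all $x,y\in O_n(f,x_0)$ there exist $a,b\in O_1(f,x_0)$ with $d(x,y)\le\lambda^{n-1}(d(a,b))$: applying~\eqref{ciricContraction} to $x,y\in O_n(f,x_0)$ (legitimate whenever $n>1$) produces $x',y'\in O_{n-1}(f,x_0)$ with $d(x,y)\le\lambda(d(x',y'))$, and if $n-1>1$ the same applied to $x',y'$ gives a pair in $O_{n-2}(f,x_0)$; since $\lambda$ is non-decreasing the inequalities compose, and after $n-1$ such steps one arrives at a pair in $O_1(f,x_0)$. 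As $O_1(f,x_0)\subset O(f,x_0)$, the boundedness hypothesis gives $d(a,b)\le\alpha$, so $d(x,y)\le\lambda^{n-1}(\alpha)$ (with $\lambda^0=\mathrm{id}$; for $n=1$ this is just the boundedness hypothesis). In particular $d(x_k,x_{k+1})\le\lambda^{k-1}(\alpha)$ for $k\ge1$, while $d(x_0,x_1)\le\alpha$ directly.

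Since $\psi$ is coordinate-wise non-decreasing, it follows that
\[
\psi\bigl(d(x_0,x_1),d(x_1,x_2),\dots,d(x_{n-1},x_n)\bigr)\le\psi\bigl(\alpha,\alpha,\lambda(\alpha),\lambda^2(\alpha),\dots,\lambda^{n-2}(\alpha)\bigr),
\]
and one then has to check that the right-hand side stays bounded from above as $n\to\infty$, using property~\eqref{psi}, the monotonicity of $\psi$ and $\lambda$, and the standing assumption that $\{\psi(y,\lambda(y),\dots,\lambda^n(y))\}$ is bounded from above for every $y\in Y$. Granting this, $O(f,x_0)\in\mathfrak{C}_{\psi,d}$, i.e., $f$ is locally orbital contractive at $x_0$ relative to $\mathfrak{C}_{\psi,d}$; since $f$ is weakly orbital continuous, the first part of Theorem~\ref{th::metaTheorem} yields a fixed set, and if moreover $(X,\Lim)$ is $\mathfrak{C}_{\psi,d}$-Fr\'echet (and $\mathfrak{C}_{\psi,d}\in SCS$) its second part yields a fixed point.

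The main obstacle is precisely the estimate in the previous paragraph. Because~\eqref{ciricContraction} is imposed only for $n>1$, the descent stops at $O_1(f,x_0)$ and not at $O_0(f,x_0)$, so the leading orbit distances $d(x_0,x_1)$ and $d(x_1,x_2)$ receive only the crude bound $\alpha$, producing finitely many extra initial copies of $\alpha$ in the majorant; one must then argue that prepending these uncontracted entries to the template vector $\bigl(\alpha,\lambda(\alpha),\lambda^2(\alpha),\dots\bigr)$ does not spoil boundedness of the $\psi$-sequence. This is where property~\eqref{psi} and the availability of boundedness of $\{\psi(y,\lambda(y),\dots,\lambda^n(y))\}$ for all $y\in Y$ (rather than only $y=\alpha$) must be brought to bear; everything else — the finite descent, coordinate-wise monotonicity, and the reduction to Theorem~\ref{th::metaTheorem} — is routine.
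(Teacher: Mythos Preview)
Your plan is exactly the paper's: iterate~\eqref{ciricContraction} to bound the consecutive distances $d(x_k,x_{k+1})$ by iterates $\lambda^{\bullet}(\alpha)$, use coordinate-wise monotonicity of $\psi$ to get a majorant of the form $\psi(\alpha,\lambda(\alpha),\dots,\lambda^n(\alpha))$, conclude $O(f,x_0)\in\mathfrak{C}_{\psi,d}$, and invoke Theorem~\ref{th::metaTheorem}. The only difference is one of bookkeeping: the paper descends all the way to $O_0(f,x_0)=O(f,x_0)$, asserting $d(x_{n+1},x_n)\le\lambda^n(d(y,z))\le\lambda^n(\alpha)$ for every $n\in\NN$ and hence obtaining directly
\[
\psi\bigl(d(x_0,x_1),\dots,d(x_n,x_{n+1})\bigr)\le\psi\bigl(\alpha,\lambda(\alpha),\dots,\lambda^n(\alpha)\bigr),
\]
so the ``extra $\alpha$'' problem never arises.

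In other words, the paper tacitly treats the contractiveness hypothesis as valid from $n\ge 1$ (descent $O_1\to O_0$ included), not merely $n>1$. Under that reading your worry evaporates and the proof is two lines. Under the literal reading $n>1$ that you adopt, your diagnosis is correct: the descent halts at $O_1$, the majorant acquires an uncontracted leading entry, and the paper supplies no mechanism to absorb it --- with only coordinate-wise monotonicity, property~\eqref{psi}, and boundedness of $\{\psi(y,\lambda(y),\dots,\lambda^n(y))\}$ in a bare partially ordered set, there is no evident way to bound $\psi(\alpha,\alpha,\lambda(\alpha),\dots)$, since~\eqref{psi} goes the wrong direction and $\lambda$ need not hit $\alpha$. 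So what you flag as ``the main obstacle'' is, from the paper's standpoint, an artifact of an off-by-one in the statement rather than a genuine analytic step; the intended argument uses $n\ge 1$ and is then routine.
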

\begin{proof}
Let $x_n = f(x_{n-1})$, $n\in\NN$. Then for each $n\in\NN$, consecutively applying inequality~\eqref{ciricContraction}, we obtain that for some $y,z\in O(f,x_0)$ and $\alpha$ from the statement of the theorem, 
$$
d(x_{n+1},x_n)\leq \lambda^n(d(y,z))\leq \lambda^n(\alpha).
$$
Thus for all $n\in\NN$
$$
\psi(d(x_0,x_1),d(x_1,x_2),\ldots,d(x_n,x_{n+1}))\le\psi(\alpha,\lambda(\alpha),\ldots,\lambda^n(\alpha)),
$$
which implies boudedness of the sequence $\{\psi(d(x_0,x_1),\ldots,d(x_n,x_{n+1}))\}$. Hence $O(f,x_0) \in \mathfrak{C}_{\psi,d}$, and it is enough to apply Theorem~\ref{th::metaTheorem} in order to finish the proof.
\end{proof}

\bibliographystyle{elsarticle-num}
\bibliography{bibliography}

\end{document}